\newtheorem{THM}{Theorem}[section]
\newtheorem{PROP}{Proposition}[section]
\newtheorem{COR}{Corollary}[section]
\newtheorem{ASS}{Assumption}[section]
\newtheorem{REM}{Remark}[section]
\newtheorem{hm-cond}{Condition}
\newcommand{\dd}{\mathbbm}
\numberwithin{equation}{section}  % If you number theorems, etc. within sections,
\def\Left#1#2\Right{\begingroup%
\def\ts@r{\nulldelimiterspace=0pt \mathsurround=0pt}%
\let\@hat=#1%
\def\sht@im{#2}%
\def\@t{{\mathchoice{\def\@fen{\displaystyle}\k@fel}%
{\def\@fen{\textstyle}\k@fel}%
{\def\@fen{\scriptstyle}\k@fel}%
{\def\@fen{\scriptscriptstyle}\k@fel}}}%
\def\g@rin{\ts@r\left\@hat\vphantom{\sht@im}\right.}%
\def\k@fel{\setbox0=\hbox{$\@fen\g@rin$}\hbox{%
$\@fen \kern.3875\wd0 \copy0 \kern-.3875\wd0%
\llap{\copy0}\kern.3875\wd0$}}%
\def\pt@h{\mathopen\@t}\pt@h\sht@im%
\Right}%
\def\Right#1{\let\@hat=#1%
\def\st@m{\mathclose\@t}%
\st@m\endgroup}
\newcommand{\vc}{\bm}
\DeclareRobustCommand\widecheck[1]{{\mathpalette\@widecheck{#1}}}
\def\@widecheck#1#2{%
\setbox\z@\hbox{\m@th$#1#2$}%
\setbox\tw@\hbox{\m@th$#1%
\widehat{%
\vrule\@width\z@\@height\ht\z@
\vrule\@height\z@\@width\wd\z@}$}%
\dp\tw@-\ht\z@
\@tempdima\ht\z@ \advance\@tempdima2\ht\tw@ \divide\@tempdima\thr@@
\setbox\tw@\hbox{%
\raise\@tempdima\hbox{\scalebox{1}[-1]{\lower\@tempdima\box
\tw@}}}%
{\ooalign{\box\tw@ \cr \box\z@}}}
\newcommand{\ool}[1]{\overline{\overline{\bm{#1}}}}
\newcommand{\ol}{\overline}
\newcommand{\bv}{\breve}
\newcommand{\vertiii}[1]%
{{\left\vert\kern-0.25ex\left\vert\kern-0.25ex\left\vert #1
\right\vert\kern-0.25ex\right\vert\kern-0.25ex\right\vert}}
\newcommand{\down}[2]{\smash{\lower#1\hbox{#2}}}
\newcommand{\up}[2]{\smash{\lower-#1\hbox{#2}}}
\def\PFOF#1{\noindent{\it Proof of {#1}.~~}}
\newcommand{\dm}{\displaystyle}
\newcommand{\qed}{\hspace*{\fill}$\Box$}
\newcommand{\proof}{\noindent {\it Proof:~}}
\newcommand{\vmin}{\wedge}
\newcommand{\EE}{\mathbb{E}}
\newcommand{\PP}{\mathbb{P}}
\newcommand{\bcal}[1]{\bm{\mathcal{#1}}}
\newcommand{\bbA}{\mathbb{A}}
\newcommand{\bbL}{\mathbb{L}}
\newcommand{\bbM}{\mathbb{M}}
\newcommand{\bbN}{\mathbb{N}}
\newcommand{\bbS}{\mathbb{S}}
\newcommand{\bbZ}{\mathbb{Z}}
\newcommand{\varep}{\varepsilon}
\def\widebar{\accentset{{\cc@style\underline{\mskip10mu}}}}
\def\Widebar{\accentset{{\cc@style\underline{\mskip8mu}}}}
\newcommand{\wbartil}[1]{\if#1L \widebar{\hspace{-0.12zw}\widetilde{#1}}
\else {\if#1M \widebar{\widetilde{\!M\!}}
\else {\if#1W \widebar{\widetilde{\!W\!}}
\else {\if#1U \widebar{\hspace{-0.03zw}\widetilde{#1}}
\else {\if#1V \widebar{\hspace{-0.03zw}\widetilde{#1}}
\else {\if#1Y \widebar{\hspace{-0.0zw}\widetilde{#1}}
\else \,\widebar{\!\widetilde{#1}}
\fi}
\fi}
\fi}
\fi}
\fi}
\fi}
\def\eqnarray{\stepcounter{equation}\let\@currentlabel=\theequation
\global\@eqnswtrue
\global\@eqcnt\z@\tabskip\@centering\let\\=\@eqncr
$$\halign to \displaywidth\bgroup\@eqnsel\hskip\@centering
$\displaystyle\tabskip\z@{##}$&\global\@eqcnt\@ne
\hfil$\;{##}\;$\hfil
&\global\@eqcnt\tw@ $\displaystyle\tabskip\z@{##}$\hfil
\tabskip\@centering&\llap{##}\tabskip\z@\cr}
\begin{document}\thispagestyle{empty}

\hfill
%{\small Last update date: \today}
%Submitted to STOCHASTIC MODELS, \today.

\vspace{-10mm}

{\large{\bf
\begin{center}
A geometric convergence formula for the level-increment-truncation approximation of M/G/1-type Markov chains%
%
%
%\footnote[1]{%
%To appear in Advances in Journal of the Operations Research Society of Japan, vol. 65, no. 4, October 2022
%}
%}
%
%
%\footnote[1]{This
%is a revised version of the paper published in Stochastic Models
%vol.~26, no.~4, pp.~505--548, 2010. \\ In the revised version, some
%errors are corrected.}
\end{center}
}
}

\begin{center}
{
\begin{tabular}[h]{cc}
Katsuhisa Ouchi\footnotemark[2] & Hiroyuki Masuyama\footnotemark[3]                        \\ % author names
\textit{Kyoto University}&\textit{Tokyo Metropolitan University}\\ % affiliations
\end{tabular}
\footnotetext[2]{E-mail: o-uchi@sys.i.kyoto-u.ac.jp}
\footnotetext[3]{E-mail: masuyama@tmu.ac.jp}
}

\bigskip
\medskip

{\small
\textbf{Abstract}

\medskip

\begin{tabular}{p{0.85\textwidth}}
This paper considers an approximation usually used when implementing Ramaswami's recursion for the stationary distribution of the M/G/1-type Markov chain.  The approximation is called the level-increment-truncation approximation because it truncates level increment at a given threshold. The main contribution of this paper is to present a geometric convergence formula of the level-wise difference between the respective stationary distributions of the original M/G/1-type Markov chain and its LI truncation approximation under the assumption that the level-increment distribution is light-tailed.
\end{tabular}
}
% \samllsize ends
\end{center}

\begin{center}
\begin{tabular}{p{0.90\textwidth}}
{\small
{\bf Keywords:} %
M/G/1-type Markov chain;
Ramaswami's recursion;
level-increment (LI) truncation approximation;
level-wise difference;
light-tailed
%
% End of Keywords
%

\medskip

{\bf Mathematics Subject Classification:} %
60J10; 60K25
}%\samllsize ends
\end{tabular}

\end{center}

%%%%%%%%%%%%%%%%%%%%%%%%%%%%%%%%%%%%%%%%%%%%%%%%%%%%%%%%%%%%%%%%%%%%
\section{Introduction}
\label{sec:Intro}
%%%%%%%%%%%%%%%%%%%%%%%%%%%%%%%%%%%%%%%%%%%%%%%%%%%%%%%%%%%%%%%%%%%%%
%%%%%%%%%%%%%%%%%%%%%%%%%%%%%%%%%%%%%%%%%%%%%%%%%%%%%%%%%%%%%%%%%%%%

This paper considers a computable approximation of M/G/1-type Markov chains from the motivation of controlling the computational error of their stationary distribution vectors. The class of M/G/1-type Markov chains plays an important role in the analysis of M/G/1-type queues \cite{Neut89} including BMAP/GI/1 ones \cite{Luca91}, where the abbreviation ``BMAP" stands for the batch Markovian arrival process.

The (canonical) M/G/1-type Markov chain is driven by the following transition probability matrix (see, e.g., \cite[Canonical form at page 76 in Chapter~2]{Neut89} and \cite[Section~3.5]{He14}):
\begin{equation*}
\vc{P}=
\bordermatrix{
& \bbL_0
& \bbL_1
& \bbL_2
& \bbL_3
& \cdots \cr
\bbL_0
& \vc{B}_0
& \vc{B}_1
& \vc{B}_2
& \vc{B}_3
& \cdots \cr
\bbL_1
& \vc{B}_{-1}
& \vc{A}_0
& \vc{A}_1
& \vc{A}_2
& \cdots \cr
\bbL_2
& \vc{O}
& \vc{A}_{-1}
& \vc{A}_0
& \vc{A}_1
& \cdots \cr
\bbL_3
& \vc{O}
& \vc{O}
& \vc{A}_{-1}
& \vc{A}_0
& \cdots
\cr
~\vdots
& \vdots
& \vdots
& \vdots
& \vdots
& \ddots
},
%\label{defn-P}
\end{equation*}
where $\vc{O}$ denotes the zero matrix, $\bbL_0 = \{0\} \times \{1,2,\dots,M_0\}$, and $\bbL_k = \{k\} \times \{1,2,\dots,M_1\}$ for $k \in \bbN:=\{1,2,3,\dots\}$. For $k \in \bbZ_+ :=\{0,1,2,\dots\}$, the subset $\bbL_k$ of the state space $\bbS := \cup_{k=0}^{\infty}\bbL_k$ is called level $k$, and an element $(k, j) \in \bbL_k$ is called {\it phase} $j$ of level $k$.

The M/G/1-type stochastic matrix $\vc{P}$ is irreducible and positive recurrent with the unique stationary distribution vector, denoted by $\vc{\pi} = (\pi_{k, i})_{(k,i) \in \bbS}$, under the well-known assumption below (see \cite[Chapter~XI, Proposition~3.1]{Asmu03}).
\begin{ASS} \label{assum1}
Let $\vc{e}$ denote a column vector of ones. The following hold: (i) The stochastic matrix $\vc{P}$ is irreducible;
(ii) $\vc{A}:=\sum_{k=-1}^\infty \vc{A}_k$ is an irreducible stochastic matrix;
(iii) $\ol{\vc{m}}_B:=\sum_{k = 1}^\infty k\vc{B}_k\vc{e}$ is finite; and
(iv) $\sigma := \vc{\varpi} \ol{\vc{m}}_A < 0$, where $\ol{\vc{m}}_A = \sum_{k = -1}^\infty k\vc{A}_k\vc{e}$, and where $\vc{\varpi}$ denotes the unique stationary distribution vector of $\vc{A}$.
\end{ASS}

\begin{REM}
Throughout the paper, we follow the standard rules of vector notation in Neuts-style matrix analysis \cite{Neut89,Lato99}. That is, row vectors are denoted by bold English small letters and column vectors by bold Greek small letters. The only exception is the stationary distribution vector of the probability matrix $\vc{G}$, which will be defined later. This is a row vector, but it is conventionally assigned the bold English small letter $\vc{g}$.
\end{REM}

It is standard to use Ramaswami's recursion \cite{Rama88} for computing the level-wise partitioned stationary distribution vector $\vc{\pi} = (\vc{\pi}_0,\vc{\pi}_1, \vc{\pi}_2,\dots)$ of the M/G/1-type stochastic matrix $\vc{P}$. The key component of Ramaswami's recursion is the $G$-matrix $\vc{G}$, and $\vc{G}$ is the minimal nonnegative solution of the following matrix equation (see, e.g., \cite{Neut89}):
\begin{align}
\vc{G} = \sum_{m=-1}^{\infty} \vc{A}_m\vc{G}^{m+1}.
\label{eqn:G-matrix}
\end{align}
Under Assumption \ref{assum1}, the matrix $\vc{G}$ is a stochastic matrix \cite[Theorem 2.3.1]{Neut89} and has a single communication class \cite[Proposition~2.1]{Kimu10}. Therefore, $\vc{G}$ has the unique stationary distribution vector, denoted by $\vc{g}$ (the vector $\vc{g}$ is used in the next section). With this matrix $\vc{G}$, Ramaswami's recursion \cite{Rama88} is described in the following way (see also \cite{Sche90}).
\begin{subequations}\label{eq:Rama-pi(k)}
\begin{alignat}{2}
\vc{\pi}_k
&= \vc{\pi}_0\vc{R}_0(k)
+ \sum_{\ell=1}^{k-1}\vc{\pi}_\ell\vc{R}(k-\ell),
& \quad k &\in \bbN,
\\
\vc{R}(k)
&=
\dm\sum_{m=0}^{\infty}
\vc{A}_{k+m}\vc{G}^m[\vc{I}-\vc{\Phi}_0]^{-1}, & \quad k &\in \bbN,
\\
\vc{R}_0(k)
&=
\sum_{m=0}^{\infty}
\vc{B}_{k+m}\vc{G}^m[\vc{I}-\vc{\Phi}_0]^{-1}, & \quad k &\in \bbN,
\\
\vc{\Phi}_0 &= \sum_{m=0}^{\infty}\vc{A}_m\vc{G}^m,
\end{alignat}
\end{subequations}
where $\vc{I}$ denotes the identity matrix. Furthermore, $\vc{\pi}_0$ is given by
\begin{align}
\vc{\pi}_0
= {
\vc{\kappa}
\over
\vc{\kappa}\vc{R}_0 [\vc{I} - \vc{R}]^{-1} \vc{e}
},
\label{eq:Rama-pi(0)}
\end{align}
where $\vc{R}_0 = \sum_{m=1}^{\infty}\vc{R}_0(m)$, $\vc{R} = \sum_{m=1}^{\infty}\vc{R}(m)$, and $\vc{\kappa}$ is the stationary distribution vector of the stochastic matrix $\vc{K}$ defined as
\begin{align*}
\vc{K}
= \vc{B}_0 + \sum_{m=1}^{\infty} \vc{B}_m \vc{G}^m.
\end{align*}

\begin{REM}
Assumption \ref{assum1} ensures that $\{(X_{\nu},J_{\nu})\}$ is irreducible and positive recurrent (see \cite[Chapter~XI, Proposition~3.1]{Asmu03}). Therefore, $\vc{K}$ is an irreducible stochastic matrix with the unique stationary distribution vector~$\vc{\kappa}$ (see \cite[Theorem~3.1]{Sche90}). In addition,  inverse $[\vc{I} - \vc{\Phi}_0]^{-1}$ exists (see the proof of \cite[Theorem~2.1~(ii)]{Sche90}) and $[\vc{I} - \vc{R}]^{-1}$ does (see \cite[Theorem~3.4]{Zhao98}).
\end{REM}

Ramaswami's recursion (\ref{eq:Rama-pi(k)}) with (\ref{eq:Rama-pi(0)}) includes the infinite sequences $\{\vc{A}_k;k \ge -1\}$ and $\{\vc{B}_k;k \ge -1\}$, and therefore the infinite sequences are {\it truncated} in implementing the recursion. More specifically, the infinite sequences $\{\vc{A}_k;k \ge -1\}$ and $\{\vc{B}_k;k \ge -1\}$ are replaced with the substantially finite sequences  $\{\vc{A}_k^{(N)};k \ge -1\}$ and $\{\vc{B}_k^{(N)}; k\ge -1\}$, where
\begin{eqnarray}
\vc{A}^{(N)}_k
&=&
\left\{
\begin{array}{ll}
\vc{A}_k,			& -1 \le k \le N-1,
\\
\ol{\vc{A}}_{N-1}:=\sum_{\ell=N}^{\infty}\vc{A}_{\ell}, 	& k=N,
\\
\vc{O}, & k\ge N+1,
\end{array}
\right.
\label{defn-A^(N)(k)}
\\
\vc{B}^{(N)}_k
&=&
\left\{
\begin{array}{ll}
\vc{B}_k,			& -1 \le k \le N-1,
\\
\ol{\vc{B}}_{N-1}:=\sum_{\ell=N}^{\infty}\vc{B}_{\ell}, 	& k=N,
\\
\vc{O}, & k\ge N+1,
\end{array}
\right.
\label{defn-B^(N)(k)}
\end{eqnarray}

The truncated sequences $\{\vc{A}_k^{(N)}\}$ and $\{\vc{B}_k^{(N)}\}$ yield a computable approximation $\vc{\pi}^{(N)} := (\vc{\pi}_0^{(N)},\vc{\pi}_1^{(N)},\vc{\pi}_2^{(N)},\dots)$ to the stationary distribution vector $\vc{\pi}=(\vc{\pi}_0,\vc{\pi}_1,\vc{\pi}_2,\dots)$. Indeed, the approximate distribution vector $\vc{\pi}^{(N)} = (\vc{\pi}_0^{(N)},\vc{\pi}_1^{(N)},\vc{\pi}_2^{(N)},\dots)$ is obtained by using $\{\vc{A}_k^{(N)}\}$ and $\{\vc{B}_k^{(N)}\}$ for $\{\vc{A}_k\}$ and $\{\vc{B}_k\}$ in Ramaswami's recursion (\ref{eq:Rama-pi(k)}) with (\ref{eqn:G-matrix}):
\begin{subequations}\label{eq:Rama-pi^{(N)}(k)}
For $k=1,2,\dots$,
\begin{alignat}{2}
\vc{\pi}_k^{(N)}
&= \vc{\pi}_0^{(N)} \vc{R}_0^{(N)}(k)
+ \sum_{\ell=1}^{k-1}\vc{\pi}_\ell^{(N)} \vc{R}^{(N)}(k-\ell),
\\
\vc{R}^{(N)}(k)
&=
\sum_{m=0}^{N-k}
\vc{A}_{k+m}^{(N)} [\vc{G}^{(N)}]^m
[\vc{I}-\vc{\Phi}_0^{(N)}]^{-1}, & ~ k &\in [1,N],
\\
\vc{R}_0^{(N)}(k)
&=
\sum_{m=0}^{N-k}
\vc{B}_{k+m}^{(N)} [\vc{G}^{(N)}]^m
[\vc{I}-\vc{\Phi}_0^{(N)}]^{-1}, & ~ k &\in [1,N],
\\
\vc{\Phi}_0^{(N)}
&= \sum_{m=0}^N \vc{A}_m^{(N)} [\vc{G}^{(N)}]^m,
\end{alignat}
\end{subequations}
where $\vc{G}^{(N)}$ is the minimal nonnegative solution of the matrix equation
\begin{align}
\vc{G}^{(N)} = \sum_{m=0}^N \vc{A}_m^{(N)}[\vc{G}^{(N)}]^{m+1};
\label{eqn:G^{(N)}-matrix}
\end{align}
and $\vc{\pi}_0^{(N)}$ is given by
\begin{align}
\vc{\pi}_0^{(N)}
= {
\vc{\kappa}^{(N)}
\over
\vc{\kappa}^{(N)}\vc{R}_0^{(N)}[\vc{I} - \vc{R}^{(N)}]^{-1}\vc{e}
},
\end{align}
where $\vc{R}_0^{(N)} = \sum_{m=1}^N\vc{R}_0^{(N)}(m)$, $\vc{R}^{(N)} = \sum_{m=1}^N\vc{R}^{(N)}(m)$, and $\vc{\kappa}^{(N)}$ is the stationary distribution vector of the stochastic matrix $\vc{K}^{(N)}$ defined as
\begin{align}
\vc{K}^{(N)}
= \vc{B}_0^{(N)} + \sum_{m=1}^N \vc{B}_m^{(N)} [\vc{G}^{(N)}]^m.
\label{defn:K^{(N)}}
\end{align}
Clearly, $\vc{\pi}^{(N)}= (\vc{\pi}_0^{(N)},\vc{\pi}_1^{(N)},\vc{\pi}_2^{(N)},\dots)$ is computable, provided that $\vc{G}^{(N)}$ is given. The matrix $\vc{G}^{(N)}$ is computed as the limit of the following sequence $\{\vc{G}^{(N)}_n : n \in \bbZ_+\}$:
\begin{equation*}
\vc{G}^{(N)}_n=
\begin{cases}
\displaystyle
\vc{O}, & \text{$n=0$,}
\\
\displaystyle
\sum_{m = -1}^N \vc{A}^{(N)}_m [\vc{G}^{(N)}_{n-1}]^{m+1},
 & \text{$n \in \bbN$.}
\end{cases}
\end{equation*}

We call $\vc{\pi}^{(N)}$ the {\it level-increment (LI) truncation approximation} to the stationary distribution vector $\vc{\pi}$. This approximation is equivalent to replacing the original stochastic matrix $\vc{P}$ with another one $\vc{P}^{(N)}$ defined as
\begin{align*}
\vc{P}^{(N)}
=
\bordermatrix{
& \bbL_0
& \bbL_1
& \bbL_2
& \bbL_3
& \cdots \cr
\bbL_0
& \vc{B}^{(N)}_0
& \vc{B}^{(N)}_1
& \vc{B}^{(N)}_2
& \vc{B}^{(N)}_3
& \cdots \cr
\bbL_1
& \vc{B}^{(N)}_{-1}
& \vc{A}^{(N)}_0
& \vc{A}^{(N)}_1
& \vc{A}^{(N)}_2
& \cdots \cr
\bbL_2
& \vc{O}
& \vc{A}^{(N)}_{-1}
& \vc{A}^{(N)}_0
& \vc{A}^{(N)}_1
& \cdots \cr
\bbL_3
& \vc{O}
& \vc{O}
& \vc{A}^{(N)}_{-1}
& \vc{A}^{(N)}_0
& \cdots
\cr
~\vdots
& \vdots
& \vdots
& \vdots
& \vdots
& \ddots
}.
%\label{defn-P^{(N)}}
\end{align*}
Equations (\ref{defn-A^(N)(k)})--(\ref{defn:K^{(N)}}) imply that the approximate distribution vector $\vc{\pi}^{(N)} = (\vc{\pi}_0^{(N)},\vc{\pi}_1^{(N)},\vc{\pi}_2^{(N)},\dots)$ is a stationary distribution vector of the M/G/1-type stochastic matrix $\vc{P}^{(N)}$. In addition, an M/G/1-type Markov chain driven by $\vc{P}^{(N)}$ has no level increments beyond $N$. Hence, we refer to $\vc{P}^{(N)}$ and its stationary distribution vector $\vc{\pi}^{(N)}$ as the {\it level-increment (LI) truncation approximations} to $\vc{P}$ and $\vc{\pi}$, respectively.

We here emphasize the following: it is worthwhile to study the error evaluation of the LI truncation approximation, more specifically, to estimate the truncation parameter $N$ of level increments such that given error tolerance is satisfied (even if the name ``LI truncation approximation" is used only in the literature \cite{Ouchi-Masuyama21}) because the approximation is a standard method for computing the stationary distribution of the M/G/1-type Markov chain.

As far as we know, there are no studies on the LI truncation approximation except for \cite{Ouchi-Masuyama21}. Under Assumption~\ref{assum1}, the vector $\vc{\pi}^{(N)}$ is the unique stationary distribution of $\vc{P}^{(N)}$ \cite[Proposition~3.1]{Ouchi-Masuyama21} and satisfies the following \cite[Theorem 4.2]{Ouchi-Masuyama21}:
\begin{equation}
\lim_{N \to \infty} \|\vc{\pi}^{(N)} - \vc{\pi}\| = 0.
\label{prop:convergence}
\end{equation}
In addition, suppose that there exist $M_1$- and $M_0$-dimensional nonnegative column vectors $\vc{c}_A$ and $\vc{c}_B$ (either of them is a non-zero vector) and a long-tailed distribution $F$ on $\bbZ_+$ (see, e.g., \cite{Foss13} for the definition of long-tailed distributions) such that
\begin{align*}
\lim_{N \to \infty}
{
\ool{\vc{A}}_N\vc{e}
\over
\ol{F}(N)
} = \vc{c}_A,
\quad
\lim_{N \to \infty}
{
\ool{\vc{B}}_N\vc{e}
\over
\ol{F}(N)
} = \vc{c}_B,
\end{align*}
where
\begin{align*}
\ool{\vc{A}}_k
&= \sum_{\ell = k + 1}^\infty \ol{\vc{A}}_\ell,\quad
\ool{\vc{B}}_k
= \sum_{\ell = k + 1}^\infty \ol{\vc{B}}_\ell,
\quad k \in \bbZ_+,
%\label{defn-ool{A}(k)}
\end{align*}
We then have the subgeometric convergence formula (\cite[Theorem 5.2]{Ouchi-Masuyama21}): For $k \in \bbZ_+$,
\begin{align*}
\lim_{N \to \infty}
{
\vc{\pi}_0^{(N)} - \vc{\pi}_k
\over
\ol{F}(N)
}
&=
{\vc{\pi}_0\vc{c}_B + \ol{\vc{\pi}}_0\vc{c}_A
\over
-\sigma
}\vc{\pi}_k>\vc{0},
%\label{eq:subexp-01}
\end{align*}
where $\ol{\vc{\pi}}_0 = \sum_{\ell = 1}^\infty \vc{\pi}_{\ell}$.

This paper considers the complementary case to the one studied in \cite{Ouchi-Masuyama21}. The main contribution of this paper is to present a geometric convergence formula for the level-wise difference $\vc{\pi}^{(N)}_k - \vc{\pi}_k$, assuming that the level-increment distribution is light-tailed. The geometric convergence formula shows the decay rate of $\vc{\pi}^{(N)}_k - \vc{\pi}_k$ is equivalent to the tail decay rate of the integrated-tail distribution of level increments. This decay rate equivalence is consistent with what is inspired by the subgeometric convergence formula in \cite{Ouchi-Masuyama21}.

The rest of this paper consists of two sections. Section~\ref{sec:main_results} presents the main theorem on the geometric convergence formula for the level-wise difference $\vc{\pi}^{(N)}_k - \vc{\pi}_k$, together with its corollaries. Section~\ref{sec:concluding} contains concluding remarks.

%%%%%%%%%%%%%%%%%%%%%%%%%%%%%%%%%%%%%%%%%%%%%%%%%%%%%%%%%%%%%%%%%%%%
%%%%%%%%%%%%%%%%%%%%%%%M/G/1-type Markov chains%%%%%%%%%%%%%%%%%%%%%
%%%%%%%%%%%%%%%%%%%%%%%%%%%%%%%%%%%%%%%%%%%%%%%%%%%%%%%%%%%%%%%%%%%%
\section{Main results}\label{sec:main_results}
%%%%%%%%%%%%%%%%%%%%%%%%%%%%%%%%%%%%%%%%%%%%%%%%%%%%%%%%%%%%%%%%%%%%%
%%%%%%%%%%%%%%%%%%%%%%%%%%%%%%%%%%%%%%%%%%%%%%%%%%%%%%%%%%%%%%%%%%%%

This section consists of two subsections. Section~\ref{sec:definitions}
provides basic definitions and some assumptions for the main theorem of this paper. Section~\ref{subsec:main_thm} presents the main theorem, together with its corollaries, on a geometric convergence formula for the level-wise difference between the stationary distribution vector and its LI truncation approximation.

%%%%%%%%%%%%%%%%%%%%%%%%%%%%%%%%%%%%%%%%%%%%%%%%%%%%%%%%%%%%%%%%%%%%
%%%%%%%%%%%%%%%%%%%%%%%%%%%%%%%%%%%%%%%%%%%%%%%%%%%%%%%%%%%%%%%%%%%%
\subsection{Basic definitions and assumptions}
\label{sec:definitions}
%%%%%%%%%%%%%%%%%%%%%%%%%%%%%%%%%%%%%%%%%%%%%%%%%%%%%%%%%%%%%%%%%%%%
%%%%%%%%%%%%%%%%%%%%%%%%%%%%%%%%%%%%%%%%%%%%%%%%%%%%%%%%%%%%%%%%%%%%

We provide some definitions concerned with vectors and matrices. Let $\|\vc{x}\| = \sum_{i} |x_i|$ for any vector $\vc{x}:=(x_i)$. For any matrix function $\vc{Z}(\,\cdot\,):=(Z_{i,j}(\,\cdot\,))$ and scalar function $f(\,\cdot\,)$ on $(-\infty,\infty)$, write $\vc{Z}(x) = \ol{\bcal{O}}(f(x))$ to represent
\begin{align*}
\limsup_{x\to\infty} { \sup_i \sum_{j}|Z_{i,j}(x)| \over f(x)}
< \infty.
\end{align*}
In addition, for any matrix $\vc{X} \ge \vc {O}$, write $\vc{X} < \infty$ when each element of $\vc{X}$ is finite.

Next, we introduce three assumptions for our geometric convergence formula for the level-wise difference $\vc{\pi}^{(N)}_k-\vc{\pi}_k$.
\begin{ASS}\label{assum:aperiodic}
The single communication class of $\vc{G}$ is aperiodic (primitive), and thus (see, e.g., \cite[Theorem~8.5.1]{Horn13}) there exists some $\varepsilon > 0$ such that
\begin{equation}
\vc{G}^n = \vc{e}\vc{g} + \ol{\bcal{O}}((1+\varepsilon)^{-n}).
\label{eq:G-ergodic}
\end{equation}
\end{ASS}

\begin{REM}\label{rem:G_aperiodic}
Assumption~\ref{assum:aperiodic} is equivalent to the aperiodicity of a Markov additive process (MAdP) $\{(\bv{X}_n, \bv{J}_n); n\in\bbZ_+\}$ with kernel $\{\vc{A}_k; k\in\bbZ\}$, where $\vc{A}_k= \vc{O}$ for $k=-2, -3, \ldots$ (see \cite[Proposition 2.3]{Masu11} and \cite[Proposition~2.5.2]{Kimu13}).
\end{REM}

\begin{ASS}\label{assum:light-tailed}
Let
\begin{align*}
r_A &= \sup\left\{z \ge 1: \sum_{k=1}^{\infty} z^k \vc{A}_k < \infty\right\},
\\
r_B &= \sup\left\{z \ge 1: \sum_{k=1}^{\infty} z^k \vc{B}_k < \infty\right\},
\end{align*}
and assume that $r := \min (r_A, r_B) > 1$, and thus the increment of the level process $\{X_n\}$ is light-tailed.
\end{ASS}

\begin{ASS}\label{assum3}
There exists a nonnegative function $f$ on $\bbZ_+$ such that
\begin{align}
\lim_{N \to \infty}
{
\ool{\vc{A}}_N\vc{e}
\over
r^{-N}f(N)
} &= \vc{c}_A,
\quad
\lim_{N \to \infty}
{
\ool{\vc{B}}_N\vc{e}
\over
r^{-N}f(N)
} = \vc{c}_B,
\label{eq:assum_geometric}
\\
\lim_{N \to \infty}
{
f(N)
\over
f(N-1)
} &= 1,
\label{eq:assum_geometric_2}
\end{align}
where $\vc{c}_A \ge \vc{0}$ and $\vc{c}_B \ge \vc{0}$ are $M_1$- and $M_0$-dimensional finite column vectors, respectively, and either of them is a non-zero vector.
\end{ASS}

\begin{REM}\label{rem:c_A_and_c_B}
Assumption~\ref{assum:light-tailed} implies that (i) if $r_A>r_B$ then $\vc{c}_A=\vc{0}$; and (ii) if $r_A < r_B$ then $\vc{c}_B=\vc{0}$.
\end{REM}

\if0
%%%%%%%%%%%%%%%%%%%%%%%%%%%%%%%%%%%%%%%%%%%%%%%%%%%%%%%%%%%%%%%%%%%%%%
\begin{REM}\label{rem-A}
The matrices $\ool{\vc{A}}_k$ and $\ool{\vc{B}}_k$ are finite under Assumption~\ref{assum1}. This assumption ensures that $\sum_{k=1}^{\infty}k\vc{A}_k < \infty$ and $\sum_{k=1}^{\infty}k\vc{B}_k < \infty$ and that they are equivalent to $\sum_{k=0}^{\infty}\ol{\vc{A}}_k < \infty$ and $\sum_{k=0}^{\infty}\ol{\vc{B}}_k < \infty$.
\end{REM}
%%%%%%%%%%%%%%%%%%%%%%%%%%%%%%%%%%%%%%%%%%%%%%%%%%%%%%%%%%%%%%%%%%%%%%
\fi

\begin{REM}\label{rem:example_of_f(N)}
The function $f$ of Assumption~\ref{assum3} is introduced to expand the applicability of the main theorem (Theorem~\ref{th:geo}). For example, if the function $f$ was removed from Assumption~\ref{assum3}, Assumption~\ref{assum3} could not cover M/G/1-type Markov chains with discrete gamma-distributed level increments; that is, with $\{\vc{A}_k\}$ and $\{\vc{B}_k\}$ such that
\begin{alignat*}{2}
\vc{A}_k &= \vc{C}_A k^{\alpha - 1} \gamma_A^k, &\quad k & \in \bbZ_+,
\\
\vc{B}_k &= \vc{C}_B k^{\beta - 1} \gamma_B^k, &\quad k & \in \bbZ_+,
\end{alignat*}
where $\alpha,\beta > 0$, $\gamma \in (0,1)$, and $\vc{C}_A$ and $\vc{C}_B$ are some nonnegative matrices. Naturally, $f$ is allowed to be of other types than power type. Shown below are typical examples of the function $f$ satisfying \eqref{eq:assum_geometric_2}:
\begin{enumerate}
\item $f$ is of power type, i.e., $f(k) = k^{\alpha}$ with $\alpha \in (-1, \infty)$.
\item $f$ is logarithmic, i.e., $f(k) = \log (k + 1)$.
\item $f$ is constant, i.e., $f(k) = c$ with $c \in (0, \infty)$.
\end{enumerate}
\end{REM}
\begin{REM}\label{rem:r^Nf(N)_to_0}
Equation \eqref{eq:assum_geometric_2} implies that
\begin{align*}
\lim_{N \to \infty} {r^{-N} f(N) \over r^{-(N-1)} f(N-1)} = r^{-1},
\end{align*}
that is, $r^{-N} f(N)$ eventually converges to zero at a rate as $r^{-N}$.
\end{REM}

\subsection{The main theorem and its corollaries}\label{subsec:main_thm}

In this subsection, we first present the main theorem, which is a geometric convergence formula for the level-wide difference $\vc{\pi}^{(N)}_k-\vc{\pi}_k$ between the stationary distribution vector $\vc{\pi}$ and its LI truncation approximation $\vc{\pi}^{(N)}$. We then provide two corollaries of the theorem. The first one is concerned with the decay rate of the relative error of $\vc{\pi}^{(N)}_k$ to $\vc{\pi}_k$. The second one relates the decay rates of the level-wide difference and relative error to the decay rate of the level-increment distribution.

The following is the main theorem of this paper.
\begin{THM}\label{th:geo}
If Assumptions \ref{assum1} and \ref{assum:aperiodic}--\ref{assum3} hold,
then
\begin{alignat}{2}
\lim_{N \to \infty}
{
\vc{\pi}^{(N)}_k - \vc{\pi}_k
\over
r^{-N}f(N)
}
&=
{r(\vc{\pi}_0\vc{c}_B + \ol{\vc{\pi}}_0\vc{c}_A)
\over
-\sigma
}\vc{\pi}_k>\vc{0}, &\quad k &\in \bbZ_+.
\label{eq:th-geo}
\end{alignat}
\end{THM}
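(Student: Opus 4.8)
The plan is to reduce everything to an exact first-order perturbation identity and then to the first moment of the probability mass that the truncation displaces. Put $\vc{\Delta}_N = \vc{P}^{(N)} - \vc{P}$ and let $\vc{D} = \sum_{n=0}^{\infty}(\vc{P}^n - \vc{e}\vc{\pi})$ be the deviation matrix of $\vc{P}$, which exists because Assumptions~\ref{assum:aperiodic} and \ref{assum:light-tailed} make the chain geometrically ergodic. From $\vc{\pi}^{(N)}(\vc{I}-\vc{P}^{(N)}) = \vc{0}$ and $\vc{\pi}(\vc{I}-\vc{P}) = \vc{0}$ one gets $(\vc{\pi}^{(N)}-\vc{\pi})(\vc{I}-\vc{P}) = \vc{\pi}^{(N)}\vc{\Delta}_N$; multiplying on the right by $\vc{D}$ and using $(\vc{I}-\vc{P})\vc{D} = \vc{I}-\vc{e}\vc{\pi}$ together with $(\vc{\pi}^{(N)}-\vc{\pi})\vc{e}=0$ yields
\begin{align*}
\vc{\pi}^{(N)} - \vc{\pi} = \vc{\pi}^{(N)}\vc{\Delta}_N\vc{D}.
\end{align*}
Since the right-hand side is $\ol{\bcal{O}}(r^{-N}f(N))$ (as the analysis below shows) and $\|\vc{\pi}^{(N)}-\vc{\pi}\|\to 0$ by \eqref{prop:convergence}, replacing $\vc{\pi}^{(N)}$ by $\vc{\pi}$ in front incurs only $o(r^{-N}f(N))$, so it suffices to analyse $\vc{w}_N\vc{D}$ with the signed row vector $\vc{w}_N := \vc{\pi}\vc{\Delta}_N$.

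The perturbation $\vc{\Delta}_N$ acts only on up-jumps of increment exceeding $N$: out of each non-boundary level it transfers the mass $\ol{\vc{A}}_N$ from the targets reached by increments $\ell>N$ onto the target reached by increment $N$, and likewise with $\ol{\vc{B}}_N$ out of level $0$. Hence $\vc{w}_N$ is supported on levels $m\ge N$, and two structural facts hold. First, because the capped and the original jumps share the same phase marginal ($\ol{\vc{A}}_N=\sum_{\ell>N}\vc{A}_\ell$), the signed measure $\vc{w}_N$ has vanishing marginal in each arrival phase, i.e. $\sum_m (\vc{w}_N)_{(m,i)}=0$ for every phase $i$. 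Second, its level-wise first moment telescopes into the double tails,
\begin{align*}
\sum_m m\,(\vc{w}_N)_{(m,\cdot)}\vc{e}
= -\,\ol{\vc{\pi}}_0\,\ool{\vc{A}}_{N-1}\vc{e} - \vc{\pi}_0\,\ool{\vc{B}}_{N-1}\vc{e},
\end{align*}
because $\sum_{\ell>N}(\ell-N)\vc{A}_\ell = \ool{\vc{A}}_{N-1}$ and similarly for $B$. The index shift to $N-1$ is precisely what, via \eqref{eq:assum_geometric_2} and $r^{-(N-1)}=r\,r^{-N}$, will produce the prefactor $r$ in \eqref{eq:th-geo}.

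The analytic core is a large-level expansion of the deviation matrix: for each target $(k,j)$,
\begin{align*}
\vc{D}_{(m,i),(k,j)} = {m \over \sigma}\,\pi_{k,j} + c^{(k,j)}_{i} + o(1), \qquad m\to\infty,
\end{align*}
with slope $\pi_{k,j}/\sigma$ independent of the starting phase $i$. Probabilistically, from level $m$ the chain needs about $m/(-\sigma)$ steps to descend to the boundary regime and misses the low target $(k,j)$ meanwhile, producing the linear deficit, while the phase equilibrates to $\vc{\varpi}$ so that the mean descent rate is $-\sigma$. Feeding this into $\vc{w}_N\vc{D}$, the phase-dependent constant $c^{(k,j)}_i$ is annihilated by the vanishing phase marginals, the $o(1)$ part contributes only $o(r^{-N}f(N))$ because $\vc{w}_N=\ol{\bcal{O}}(r^{-N}f(N))$ is supported on $m\ge N$, and only the linear part survives:
\begin{align*}
(\vc{w}_N\vc{D})_{(k,j)}
= {\pi_{k,j} \over -\sigma}\bigl(\ol{\vc{\pi}}_0\,\ool{\vc{A}}_{N-1}+\vc{\pi}_0\,\ool{\vc{B}}_{N-1}\bigr)\vc{e} + o(r^{-N}f(N)).
\end{align*}
Dividing by $r^{-N}f(N)$ and invoking Assumption~\ref{assum3} with the index shift gives \eqref{eq:th-geo}; strict positivity follows from $\vc{\pi}_k>\vc{0}$, $-\sigma>0$, $r>1$, and $\vc{c}_A,\vc{c}_B\ge\vc{0}$ not both zero.

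The main obstacle is the deviation-matrix expansion with a genuinely $o(1)$ remainder, uniform in the starting level, for the infinite M/G/1-type kernel. I would establish it through the matrix-geometric structure (via the $R$- and $G$-matrices), using Assumption~\ref{assum:aperiodic} in the form \eqref{eq:G-ergodic} to control the descent and the light tail of Assumption~\ref{assum:light-tailed} to secure the geometric summability that both makes $\vc{\Delta}_N\vc{D}=\ol{\bcal{O}}(r^{-N}f(N))$ and legitimises interchanging the limit with the infinite level sum by dominated convergence. A secondary point is to justify the perturbation identity and the front replacement in a regime where $\vc{D}$ grows linearly in the level; this is handled by pairing that linear growth against the geometric decay of $\vc{w}_N$.
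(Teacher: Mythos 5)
Your overall strategy is sound and genuinely different in its point of departure: instead of invoking the explicit difference formula of Proposition~\ref{prop:difference_formula}, you derive the error identity $\vc{\pi}^{(N)}-\vc{\pi}=\vc{\pi}^{(N)}\vc{\Delta}_N\vc{D}$ from the two stationarity equations and the deviation matrix. Several of your structural computations are correct and genuinely illuminating: the description of $\vc{\Delta}_N$ as moving the mass $\ol{\vc{A}}_N=\sum_{\ell>N}\vc{A}_\ell$ onto the increment-$N$ target, the vanishing phase marginal of $\vc{w}_N=\vc{\pi}\vc{\Delta}_N$, the identity $\sum_{\ell>N}(\ell-N)\vc{A}_\ell=\ool{\vc{A}}_{N-1}$ (which indeed matches the paper's definition $\ool{\vc{A}}_{N-1}=\sum_{\ell=N}^{\infty}\ol{\vc{A}}_\ell$), and the final limit computation producing the factor $r$ via \eqref{eq:assum_geometric_2}. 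In effect you reconstruct why the leading term of \eqref{eq:lem} is $\frac{1}{-\sigma}\bigl[\vc{\pi}_0^{(N)}\ool{\vc{B}}_{N-1}\vc{e}+\sum_{\ell\ge1}\vc{\pi}_\ell^{(N)}\ool{\vc{A}}_{N-1}\vc{e}\bigr]\vc{\pi}_k$, which is exactly the term the paper's proof isolates in \eqref{eq:piN-pi_S0} and \eqref{eq:decay_rate_ool}.

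The genuine gap is the deviation-matrix expansion $\vc{D}_{(m,i),(k,j)}=\frac{m}{\sigma}\pi_{k,j}+c^{(k,j)}_i+o(1)$ with a remainder that is uniform enough (in $m$ and over the finitely many phases) to be paired against a signed measure supported on levels $m\ge N$. You state this as ``the main obstacle'' and sketch only that you ``would establish it through the matrix-geometric structure,'' but this expansion \emph{is} the theorem's technical core: it is precisely the content that the paper imports wholesale through Proposition~\ref{prop:difference_formula}, whose terms $\vc{H}(0;k)$, $\vc{F}_+(k;k)$, the powers $\vc{G}^{N-k}-\vc{G}^{n-k}$, and $\vc{G}(\vc{I}-\vc{A}-\ol{\vc{m}}_A\vc{g})^{-1}\vc{e}\vc{\pi}_k$ are the explicit, already-proved incarnation of your slope $\pi_{k,j}/\sigma$, phase constant $c^{(k,j)}_i$, and $o(1)$ remainder (controlled in the paper via \eqref{eq:G-ergodic} in steps \eqref{eq:Gn-GN}--\eqref{eq:ol_AG/r^(-N)_to_0}). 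Without a proof of that expansion your argument does not close. Two secondary issues also need attention: (i) the series definition $\vc{D}=\sum_{n\ge0}(\vc{P}^n-\vc{e}\vc{\pi})$ presupposes aperiodicity of $\vc{P}$, which is not among the stated hypotheses (Assumption~\ref{assum:aperiodic} concerns the Markov additive process, not $\vc{P}$ itself), so you should either verify aperiodicity of $\vc{P}$ or work with a fundamental-matrix/first-return version of $\vc{D}$ as the paper's $\vc{H}$ does; and (ii) the associativity in $(\vc{\pi}^{(N)}-\vc{\pi})(\vc{I}-\vc{P})\vc{D}$ and the replacement of $\vc{\pi}^{(N)}$ by $\vc{\pi}$ in front require Fubini-type justifications in a regime where $\vc{D}$ grows linearly in the level, hence need uniform (in $N$) first-moment bounds on $\vc{\pi}^{(N)}$, which you assert but do not supply.
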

\proof
See \ref{sec:PFOF_th-geo}. \qed

\medskip

The first corollary below shows that the relative error of $\vc{\pi}^{(N)}_k$ to $\vc{\pi}_k$ is asymptotically independent of the level variable $k$.
\begin{COR}\label{cor:geo-01}
If all the conditions of Theorem~\ref{th:geo} hold, then, for $k \in \bbZ_+$,
\begin{align}
\lim_{N \to \infty}
{1 \over r^{-N}f(N)}
\left\|
{
\vc{\pi}^{(N)}_k - \vc{\pi}_k
\over
\vc{\pi}_k\vc{e}
}
\right\|
&=
{r(\vc{\pi}_0\vc{c}_B + \ol{\vc{\pi}}_0\vc{c}_A)
\over
-\sigma
}>0.
\label{eq:geo-norm-01}
\end{align}
\end{COR}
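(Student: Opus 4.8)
The plan is to read off Corollary~\ref{cor:geo-01} directly from Theorem~\ref{th:geo}, exploiting the fact that the right-hand side of \eqref{eq:th-geo} is a fixed \emph{scalar} multiple of $\vc{\pi}_k$. First I would abbreviate that scalar as
\[
C := {r(\vc{\pi}_0\vc{c}_B + \ol{\vc{\pi}}_0\vc{c}_A) \over -\sigma},
\]
and check that $C > 0$. By Assumption~\ref{assum1}(iv), $\sigma < 0$, so $-\sigma > 0$. Since $\vc{P}$ is irreducible and positive recurrent under Assumption~\ref{assum1}, the stationary vector $\vc{\pi}$ is strictly positive, whence $\vc{\pi}_0 > \vc{0}$ and $\ol{\vc{\pi}}_0 > \vc{0}$; combined with $\vc{c}_A,\vc{c}_B \ge \vc{0}$ and the fact that at least one of them is nonzero (Assumption~\ref{assum3}), this gives $\vc{\pi}_0\vc{c}_B + \ol{\vc{\pi}}_0\vc{c}_A > 0$ and therefore $C > 0$. (This is consistent with the strict inequality $C\vc{\pi}_k > \vc{0}$ already recorded in Theorem~\ref{th:geo}.)

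Next I would note that $\vc{\pi}_k\vc{e}$ is a strictly positive scalar, again by positive recurrence: $\vc{\pi}_k \ge \vc{0}$, $\vc{\pi}_k \ne \vc{0}$, so $\vc{\pi}_k\vc{e} = \|\vc{\pi}_k\| > 0$ and the normalization in \eqref{eq:geo-norm-01} is well defined. This lets me factor the positive scalar $1/(\vc{\pi}_k\vc{e})$ out of the $\ell^1$-norm and regroup the prefactor $1/(r^{-N}f(N))$ inside it:
\[
{1 \over r^{-N}f(N)}
\left\| {\vc{\pi}^{(N)}_k - \vc{\pi}_k \over \vc{\pi}_k\vc{e}} \right\|
= {1 \over \vc{\pi}_k\vc{e}}
\left\| {\vc{\pi}^{(N)}_k - \vc{\pi}_k \over r^{-N}f(N)} \right\|.
\]

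Finally I would let $N \to \infty$ and invoke the continuity of the norm together with Theorem~\ref{th:geo}. Since $\vc{\pi}_k$ is finite-dimensional, the vector limit in \eqref{eq:th-geo} is a limit in any norm, so the argument of the norm above converges to $C\vc{\pi}_k$ and hence its norm converges to $\|C\vc{\pi}_k\| = C\|\vc{\pi}_k\| = C\,\vc{\pi}_k\vc{e}$, where the last equality uses $\vc{\pi}_k \ge \vc{0}$ and $C > 0$. Multiplying by $1/(\vc{\pi}_k\vc{e})$ cancels this factor and leaves exactly $C$, which is \eqref{eq:geo-norm-01}. There is essentially no obstacle in this argument; the only points needing care are the positivity of $\vc{\pi}_k\vc{e}$ (so the relative error is defined) and the identity $\|C\vc{\pi}_k\| = C\,\vc{\pi}_k\vc{e}$ for the nonnegative limiting vector, both immediate from positive recurrence and $C>0$. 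The entire content of the corollary thus lies in Theorem~\ref{th:geo}: it merely records that, once normalized by the total mass $\vc{\pi}_k\vc{e}$ of level $k$, the relative error decays at rate $r^{-N}f(N)$ with a limit constant $C$ that is independent of $k$.
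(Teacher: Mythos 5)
Your proof is correct and amounts to essentially the same one-step deduction from Theorem~\ref{th:geo} that the paper uses; the only cosmetic difference is that you invoke continuity of the finite-dimensional norm to pass the limit inside $\|\cdot\|$, whereas the paper notes that \eqref{eq:th-geo} forces $\vc{\pi}^{(N)}_k - \vc{\pi}_k > \vc{0}$ for all large $N$, so that $\|\vc{\pi}^{(N)}_k - \vc{\pi}_k\| = (\vc{\pi}^{(N)}_k - \vc{\pi}_k)\vc{e}$ and the scalar limit follows directly. Both routes are immediate and equally valid.
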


\begin{proof}
Equation \eqref{eq:th-geo} implies that for each $k \in \bbZ_+$ there exists $N_k \in \bbN$ such that
\[
\vc{\pi}^{(N)}_k - \vc{\pi}_k > \vc{0},
\qquad N \ge N_k,
\]
and thus
\[
\left\|
{
\vc{\pi}^{(N)}_k - \vc{\pi}_k
\over
\vc{\pi}_k\vc{e}
}
\right\|
=
{
(\vc{\pi}^{(N)}_k - \vc{\pi}_k)\vc{e}
\over
\vc{\pi}_k\vc{e}
}, \qquad N \ge N_k.
\]
Combining this and \eqref{eq:th-geo} leads to \eqref{eq:geo-norm-01}.  \qed
\end{proof}

\medskip

To describe the second corollary, we introduce a distribution associated with level increments. Let
\begin{align*}
D(k) = \sum_{(\ell,i) \in \bbS} \pi_{\ell,i}
\PP( \Delta_+ \le k \mid (X_0,J_0)=(\ell,i)),
\quad
k \in \bbZ_+,
\end{align*}
where $\Delta_+ = \max(X_1-X_0,0)$. We then have
\begin{align}
D(k)
&= \sum_{(\ell,i) \in \bbS} \pi_{\ell,i}
\PP( X_1-X_0 \le k \mid (X_0,J_0)=(\ell,i))
\nonumber
\\
&= \sum_{n=0}^k
\vc{\pi}_0 \vc{B}_n\vc{e}
+ \sum_{n=-1}^k \ol{\vc{\pi}}_0\vc{A}_n\vc{e},
\quad k \in \bbZ_+.
\label{defn:D}
\end{align}
The distribution $D$ is referred to as the {\it stationary nonnegative level-increment (SNL) distribution} \cite[Section~5.3.1]{Masu21-M/G/1-Subexp}. Let $D_I$ denote the integrated-tail distribution (equilibrium distribution) of the SNL distribution, that is, for $k  \in \bbZ_+$,
\begin{align}
D_I(k)
&= {\sum_{\ell=0}^k (1 - D(\ell)) \over \sum_{\ell=1}^{\infty}\ell D(\ell)},
\quad k \in \bbZ_+.
\label{defn:D_I(k)}
\end{align}

The second corollary is obtained by combining Theorem~\ref{th:geo} with Corollary~\ref{cor:geo-01}.
\begin{COR}\label{coro:geo_ELID}
Suppose that all the conditions of Theorem~\ref{th:geo} hold, and let $\ol{D}_I(k) = 1 - D_I(k)$ for $k \in \bbZ_+$. We then have, for $k \in \bbZ_+$,
\begin{alignat}{2}
\lim_{N\to\infty}
{
\vc{\pi}^{(N)}_k - \vc{\pi}_k
\over
\ol{D}_I(N)
}
&=
{r(\vc{\pi}_0 \ol{\vc{m}}_{B} + \ol{\vc{\pi}}_0 \ol{\vc{m}}_{A}^+)
\over -\sigma}
\vc{\pi}_k>\vc{0},
\label{eq:th-geo-2}
\\
\lim_{N\to\infty}
{1 \over
\ol{D}_I(N)
}
{
\left\|
\vc{\pi}^{(N)}_k - \vc{\pi}_k
\over
\vc{\pi}_k\vc{e}
\right\|
}
&=
{r(\vc{\pi}_0 \ol{\vc{m}}_{B} + \ol{\vc{\pi}}_0 \ol{\vc{m}}_{A}^+)
 \over -\sigma}>0,
\label{eq:geo-norm-02}
\end{alignat}
where $\ol{\vc{m}}_A^+ = \sum_{k=1}^\infty k\vc{A}_k\vc{e} = \ol{\vc{m}}_A + \vc{A}_{-1}\vc{e}$.
\end{COR}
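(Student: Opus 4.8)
The plan is to reduce the corollary to Theorem~\ref{th:geo} and Corollary~\ref{cor:geo-01} by pinning down the exact asymptotic relation between the geometric benchmark $r^{-N}f(N)$ used there and the tail $\ol{D}_I(N)$ of the integrated-tail SNL distribution. Concretely, I would show that $\ol{D}_I(N)$ is asymptotically a constant multiple of $r^{-N}f(N)$, and then divide the limits in \eqref{eq:th-geo} and \eqref{eq:geo-norm-01} by this relation; the common factor $\vc{\pi}_0\vc{c}_B + \ol{\vc{\pi}}_0\vc{c}_A$ cancels and leaves \eqref{eq:th-geo-2} and \eqref{eq:geo-norm-02}.

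First I would compute the tail of the SNL distribution. Since $\vc{\pi}_0\vc{e} + \ol{\vc{\pi}}_0\vc{e} = 1$ and the rows of $\vc{P}$ sum to one, \eqref{defn:D} gives
\[
\ol{D}(k) := 1 - D(k) = \vc{\pi}_0\ol{\vc{B}}_k\vc{e} + \ol{\vc{\pi}}_0\ol{\vc{A}}_k\vc{e}, \qquad k \in \bbZ_+ .
\]
Summing once more over $\ell > N$ and interchanging the nonnegative summations (Tonelli) yields
\[
\sum_{\ell=N+1}^{\infty}\ol{D}(\ell) = \vc{\pi}_0\ool{\vc{B}}_N\vc{e} + \ol{\vc{\pi}}_0\ool{\vc{A}}_N\vc{e} ,
\]
while the normalizing mean of $D$ in \eqref{defn:D_I(k)} is
\[
m_D := \sum_{\ell=0}^{\infty}\ol{D}(\ell) = \vc{\pi}_0\ol{\vc{m}}_B + \ol{\vc{\pi}}_0\ol{\vc{m}}_A^+ ,
\]
where I use the elementary moment identities $\sum_{\ell=0}^{\infty}\ol{\vc{B}}_\ell = \sum_{k=1}^{\infty}k\vc{B}_k$ and $\sum_{\ell=0}^{\infty}\ol{\vc{A}}_\ell = \sum_{k=1}^{\infty}k\vc{A}_k$. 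Consequently,
\[
\ol{D}_I(N) = { \vc{\pi}_0\ool{\vc{B}}_N\vc{e} + \ol{\vc{\pi}}_0\ool{\vc{A}}_N\vc{e} \over m_D } .
\]

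Next I would apply Assumption~\ref{assum3}, namely $\ool{\vc{A}}_N\vc{e} \sim r^{-N}f(N)\,\vc{c}_A$ and $\ool{\vc{B}}_N\vc{e} \sim r^{-N}f(N)\,\vc{c}_B$, to obtain
\[
\lim_{N\to\infty}{\ol{D}_I(N) \over r^{-N}f(N)} = { \vc{\pi}_0\vc{c}_B + \ol{\vc{\pi}}_0\vc{c}_A \over m_D } ,
\]
a finite, strictly positive constant. Writing the ratio $(\vc{\pi}^{(N)}_k - \vc{\pi}_k)/\ol{D}_I(N)$ as the product of $(\vc{\pi}^{(N)}_k - \vc{\pi}_k)/(r^{-N}f(N))$ and $r^{-N}f(N)/\ol{D}_I(N)$ and letting $N \to \infty$ with Theorem~\ref{th:geo} then gives \eqref{eq:th-geo-2}: the factor $\vc{\pi}_0\vc{c}_B + \ol{\vc{\pi}}_0\vc{c}_A$ cancels and $m_D = \vc{\pi}_0\ol{\vc{m}}_B + \ol{\vc{\pi}}_0\ol{\vc{m}}_A^+$ surfaces in the numerator. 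The normed statement \eqref{eq:geo-norm-02} follows identically from Corollary~\ref{cor:geo-01}.

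The Tonelli interchanges and moment identities are routine; the only points needing care are confirming that $m_D \in (0,\infty)$ and that the normalizer in \eqref{defn:D_I(k)} is exactly this mean. Finiteness of $m_D$ follows from the light-tailedness of Assumption~\ref{assum:light-tailed}, which makes $\ol{\vc{m}}_B$ and $\ol{\vc{m}}_A^+$ finite, and strict positivity follows because $D$ is nondegenerate at $0$: the nonzero vector among $\vc{c}_A,\vc{c}_B$ in Assumption~\ref{assum3} forces arbitrarily large level increments to occur with positive stationary probability. With $m_D \in (0,\infty)$ the division above is legitimate, and the strict positivity $r(\vc{\pi}_0\ol{\vc{m}}_B + \ol{\vc{\pi}}_0\ol{\vc{m}}_A^+)/(-\sigma)\,\vc{\pi}_k > \vc{0}$ is immediate from $r>1$, $-\sigma>0$, $m_D>0$, and $\vc{\pi}_k > \vc{0}$.
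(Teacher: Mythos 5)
Your proposal is correct and follows essentially the same route as the paper: both express $\ol{D}_I(N)$ as $\bigl(\vc{\pi}_0\ool{\vc{B}}_N\vc{e}+\ol{\vc{\pi}}_0\ool{\vc{A}}_N\vc{e}\bigr)/\bigl(\vc{\pi}_0\ol{\vc{m}}_B+\ol{\vc{\pi}}_0\ol{\vc{m}}_A^+\bigr)$, invoke Assumption~\ref{assum3} to show $\ol{D}_I(N)/(r^{-N}f(N))$ converges to a positive constant, and then divide the limits of Theorem~\ref{th:geo} and Corollary~\ref{cor:geo-01} by this ratio. Your additional verification of the moment identities and of $m_D\in(0,\infty)$ is sound but not a departure from the paper's argument.
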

\proof
It follows from \eqref{defn:D} and \eqref{defn:D_I(k)} that
\begin{align*}
\ol{D}_I(k)
&= \sum_{\ell=k+1}^{\infty}
{
\vc{\pi}_0 \ol{\vc{B}}_\ell\vc{e} + \ol{\vc{\pi}}_0 \ol{\vc{A}}_\ell\vc{e}
\over
\vc{\pi}_0 \ol{\vc{m}}_{B} + \ol{\vc{\pi}}_0 \ol{\vc{m}}_{A}^+
}
=
{
\vc{\pi}_0 \ool{\vc{B}}_k\vc{e} + \ol{\vc{\pi}}_0 \ool{\vc{A}}_k\vc{e}
\over
\vc{\pi}_0 \ol{\vc{m}}_{B} + \ol{\vc{\pi}}_0 \ol{\vc{m}}_{A}^+
},\quad k \in \bbZ_+.
\end{align*}
Applying Assumption~\ref{assum3} to this equation yields
\begin{align}
\lim_{k\to\infty}{\ol{D}_I(k) \over r^{-k}f(k)}
= {
\vc{\pi}_0 \vc{c}_{B} + \ol{\vc{\pi}}_0 \vc{c}_{A}
\over
\vc{\pi}_0 \ol{\vc{m}}_{B} + \ol{\vc{\pi}}_0 \ol{\vc{m}}_{A}^+
} \in (0,\infty).
\label{asymp:F_pi^I}
\end{align}
Combining \eqref{eq:th-geo} with \eqref{asymp:F_pi^I} leads to
\begin{align*}
\lim_{N\to\infty}
{
\vc{\pi}^{(N)}_k - \vc{\pi}_k
\over
\ol{D}_I(N)
} &=
\lim_{N\to\infty}
{
\vc{\pi}^{(N)}_k - \vc{\pi}_k
\over
r^{-N}f(N)
}\cdot
{
r^{-N}f(N)
\over
\ol{D}_I(N)
}
\\
&=
{
r(\vc{\pi}_0\vc{c}_B + \ol{\vc{\pi}}_0\vc{c}_A)
\over
-\sigma
}\vc{\pi}_k \cdot
{
\vc{\pi}_0 \ol{\vc{m}}_{B} + \ol{\vc{\pi}}_0 \ol{\vc{m}}_{A}^+
\over
\vc{\pi}_0 \vc{c}_{B} + \ol{\vc{\pi}}_0 \vc{c}_{A}
}
\\
&=
{r(\vc{\pi}_0 \ol{\vc{m}}_{B} + \ol{\vc{\pi}}_0 \ol{\vc{m}}_{A}^+)
\over
-\sigma}
\vc{\pi}_k,
\end{align*}
which shows that (\ref{eq:th-geo-2}) holds. Similarly, (\ref{eq:geo-norm-02}) follows from (\ref{eq:geo-norm-01}) and (\ref{asymp:F_pi^I}).
 \qed

\medskip

Corollary~\ref{coro:geo_ELID} shows that, the decay rate of $\vc{\pi}^{(N)}_k-\vc{\pi}_k$ is asymptotically equal to the integrated tail distribution $D_I$ of the SNL distribution $D$, provided that the increment of the level process $\{X_n\}$ is light-tailed (see Assumption~\ref{assum:light-tailed}). A similar decay rate equivalence is reported in the subgeometric convergence case (see \cite[Corollary 5.3]{Ouchi-Masuyama21}).

%%%%%%%%%%%%%%%%%%%%%%%%%%%%%%%%%%%%%%%%%%%%%%%%%%%%%%%%%%%%%%%%%%%%%%%%%%%%%%%%%%%%
%%%%%%%%%%%%%%%%%%%%%%%%%%%%%%%%%%%%%%%%%%%%%%%%%%%%%%%%%%%%%%%%%%%%%%%%%%%%%%%%%%%%
%%%%%%%%%%%%%%%%%%%%%%%%%%%%%%%%%%%%%%%%%%%%%%%%%%%%%%%%%%%%%%%%%%%%%%%%%%%%%%%%%%%%
\section{Concluding remarks}\label{sec:concluding}
%%%%%%%%%%%%%%%%%%%%%%%%%%%%%%%%%%%%%%%%%%%%%%%%%%%%%%%%%%%%%%%%%%%%%%%%%%%%%%%%%%%%
%%%%%%%%%%%%%%%%%%%%%%%%%%%%%%%%%%%%%%%%%%%%%%%%%%%%%%%%%%%%%%%%%%%%%%%%%%%%%%%%%%%%
%%%%%%%%%%%%%%%%%%%%%%%%%%%%%%%%%%%%%%%%%%%%%%%%%%%%%%%%%%%%%%%%%%%%%%%%%%%%%%%%%%%%
The main theorem (Theorem~\ref{th:geo}) of this paper presents the geometric convergence formula \eqref{eq:th-geo} for the LI truncation approximation to the stationary distribution vector of the M/G/1-type Markov chain.

Based on the main theorem and its corollaries, we can estimate a value of the truncation parameter $N$ satisfying the given error tolerance. Indeed, it follows from (\ref{eq:geo-norm-01}) that, for all $k \in \bbZ_+$ and sufficiently large $N \in \bbN$,
\begin{align}
\left\|
{
\vc{\pi}^{(N)}_k - \vc{\pi}_k
\over
\vc{\pi}_k\vc{e}
}
\right\|
&\approx
{(\vc{\pi}_0\vc{c}_B + \ol{\vc{\pi}}_0\vc{c}_A)
\over
-\sigma
}
r^{-N+1}f(N)
\nonumber
\\
&\le { c^* \over -\sigma }r^{-N+1}f(N),
\label{eqn:2022_0802-01}
\end{align}
where $c^* > 0$ is the maximum among all the elements of $\vc{c}_A$ and $\vc{c}_B$. Thus, we can use a value
\begin{align*}
N^*
:= \min\left\{N \in \bbN: { c^* \over -\sigma }r^{-N+1}f(N) < \varep\right\}
\end{align*}
as the truncation parameter $N$ of the LI truncation approximation, given that an error tolerance $\varep > 0$ for the relative error of $\vc{\pi}^{(N)}_k$ to $\vc{\pi}_k$.

However, there are two problems in the above argument on estimating the truncation parameter $N$: (i) The error control of the approximate distribution is for each level but not for the distribution as a whole; (ii) the first equality in error evaluation equation (\ref{eqn:2022_0802-01}) is not an exact but approximate one.

We remark on the first problem. Suppose that the dominated convergence theorem is available for our present problem. It then follows from the {\it level-wise} convergence formula (\ref{eq:geo-norm-01}) that
\begin{align}
&
\lim_{N \to \infty}
{1 \over r^{-N}f(N)}
\left\|
\vc{\pi}^{(N)} - \vc{\pi}
\right\|
\nonumber
\\
&~=
\lim_{N \to \infty}
{1 \over r^{-N}f(N)}
\sum_{k=0}^{\infty}
\left\|
{
\vc{\pi}^{(N)}_k - \vc{\pi}_k
\over
\vc{\pi}_k\vc{e}
}
\right\|
\vc{\pi}_k\vc{e}
\nonumber
\\
&~=
\sum_{k=0}^{\infty}
\lim_{N \to \infty}
{1 \over r^{-N}f(N)}
\left\|
{
\vc{\pi}^{(N)}_k - \vc{\pi}_k
\over
\vc{\pi}_k\vc{e}
}
\right\|
\vc{\pi}_k\vc{e}
\nonumber
\\
&~=
\sum_{k=0}^{\infty}
{r(\vc{\pi}_0\vc{c}_B + \ol{\vc{\pi}}_0\vc{c}_A)
\over
-\sigma
}\vc{\pi}_k\vc{e}
\nonumber
\\
&~=
{r(\vc{\pi}_0\vc{c}_B + \ol{\vc{\pi}}_0\vc{c}_A)
\over
-\sigma
}.
\label{eqn:2022_0802-02}
\end{align}
Thus, we can obtain a {\it total-variation} convergence formula. Unfortunately, we have, at present, no idea how to justify the order exchange between the limit and the infinite sum in the second equality of (\ref{eqn:2022_0802-02}). This problem is one of the future tasks. However, as for the subgeometric convergence case, we identify a sufficient condition under which such a total-variation convergence formula like (\ref{eqn:2022_0802-02}) (see \cite{Ouchi-Masuyama22}).

We close this section with a remark on the second problem. Exact and conservative error evaluation (in both geometric and subgeometric convergence cases) requires a computable upper bound for the total variation distance between the original stationary distribution and its LI truncation approximation. Such an upper bound may be derived by the Foster Lyapunov drift condition, as is done in \cite{LLM2018,Masu15-AAP,Masu16-SIMAX,Masu17-LAA,Masu17-JORSJ} for another approximation to countable-state Markov chains.

%%%%%%%%%%%%%%%%%%%%%%%%%%%%%%%%%%%%%%%%%%%%%%%%%%%%%%%%%%%%%%%%%%%%%%%%%%%%%%
%%%%%%%%%%%%%%%%%%%%%%%%%%%%%%%%%%%%%%%%%%%%%%%%%%%%%%%%%%%%%%%%%%%%%%%%%%%
\appendix

%\section*{\,Appendix}

\section{Proof of Theorem~\ref{th:geo}}
\label{sec:PFOF_th-geo}
%%%%%%%%%%%%%%%%%%%%%%%%%%%%%%%%%%%%%%%%%%%%%%%%%%%%%%%%%%%%%%%%%%%%%%%%
%%%%%%%%%%%%%%%%%%%%%%%%%%%%%%%%%%%%%%%%%%%%%%%%%%%%%%%%%%%%%%%%%%%%%%%%
To prove Theorem~\ref{th:geo}, we use the following difference formula between $\vc{\pi}^{(N)}_k$ and $\vc{\pi}_k$.
\begin{PROP}[{}{\cite[Lemma 4.1]{Ouchi-Masuyama21}}]\label{prop:difference_formula}
If Assumption \ref{assum1} holds, then
\begin{align}
&\vc{\pi}^{(N)}_k - \vc{\pi}_k
\nonumber
\\
&~= \vc{\pi}^{(N)}_0\Biggl[
\frac{1}{-\sigma}
\ool{\vc{B}}_{N - 1}\vc{e}\vc{\pi}_k
+ \sum_{n=N+1}^{\infty} \vc{B}_n
\left(\vc{G}^{N-k} - \vc{G}^{n-k}\right)\vc{F}_+(k; k)
\Biggr.
\nonumber
\\
&\quad~
+ \sum_{n=N+1}^{\infty} \vc{B}_n (\vc{G}^{N-1}- \vc{G}^{n-1})\vc{S}(k)
\Biggr]
\nonumber
\\
&~~
+ \!\sum_{\ell=1}^{\infty} \vc{\pi}^{(N)}_\ell
\Biggl[
\frac{1}{-\sigma}\ool{\vc{A}}_{N - 1}\vc{e}\vc{\pi}_k
+ \!\!\!\sum_{n=N+1}^{\infty}\!\! \vc{A}_n
\left( \vc{G}^{N + \ell - k} - \vc{G}^{n + \ell - k}\right)\vc{F}_+(k;k)
\Biggr.
\nonumber
\\
&{} \qquad
+ \Biggl. \sum_{n=N+1}^{\infty} \vc{A}_n
\left(\vc{G}^{N + \ell - 1} - \vc{G}^{n + \ell - 1}\right)\vc{S}(k)\Biggr],
\quad 0 \le k \le N,
\label{eq:lem}
\end{align}
where
\begin{alignat*}{2}
\vc{S}(k) &= [\vc{I}-\vc{\Phi}_0]^{-1}\vc{B}_{-1}\vc{H}(0; k)
%\nonumber
\\
&\quad~+ \vc{G}(\vc{I} -\vc{A} -\ol{\vc{m}}_A\vc{g})^{-1}\vc{e}\vc{\pi}_k,
\qquad
k \in \mathbb{Z}_+.
%\label{eq:defS}
\end{alignat*}
Note here that $\vc{H}(k;\ell) := (H(k,i ; \ell,j))_{(i,j)\in(\bbM_{1\vmin k}\times\bbM_{1\vmin \ell})}$ for $k, \ell \in \bbZ_+$ and $\vc{F}_+(k;\ell) = (F_+(k,i; \ell,j))_{(i,j) \in (\bbM_1)^2}$ for $k, \ell \in \bbN$ are respectively defined by
\begin{align*}
H(k,i;\ell,j) &= \EE_{(k,i)}\!
\left[
\sum_{\nu=0}^{T_{\{(k_*,i_*)\}}-1} \dd{1}((X_{\nu},J_{\nu})=(\ell,j))
\right]
\\
&\quad~- \pi_{\ell,j}\EE_{(k,i)}
\left[ T_{\{(k_*,i_*)\}} \right],
\\
F_+(k,i;\ell,j)
&=\EE_{(k,i)}\!
\left[
\sum_{\nu=0}^{T_{\bbL_0}-1} \dd{1}((X_{\nu},J_{\nu})=(\ell,j))
\right],
\end{align*}
where $(k_*, i_*)\in \bbS$ is any fixed state and where $T_{\bbA} = \inf\{\nu \in \bbN: (X_{\nu},J_{\nu}) \in \bbA\}$ for any subset $\bbA \subset \bbS$.
\end{PROP}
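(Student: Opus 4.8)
The cleanest route is to read \eqref{eq:lem} as the level-$k$ component of the standard perturbation identity between the two stationary vectors. Write $\vc{E} = \vc{P}^{(N)} - \vc{P}$ for the truncation perturbation and let $\vc{D}$ be the deviation (fundamental) matrix of $\vc{P}$, which exists because $\vc{P}$ is positive recurrent under Assumption~\ref{assum1} and satisfies the Poisson equation $(\vc{I}-\vc{P})\vc{D} = \vc{I} - \vc{e}\vc{\pi}$ together with $\vc{\pi}\vc{D}=\vc{0}$. Since $\vc{\pi}^{(N)}\vc{P}^{(N)}=\vc{\pi}^{(N)}$ gives $\vc{\pi}^{(N)}(\vc{I}-\vc{P})=\vc{\pi}^{(N)}\vc{E}$, right-multiplying by $\vc{D}$ and using $\vc{\pi}^{(N)}\vc{e}=1$ yields $\vc{\pi}^{(N)}-\vc{\pi}=\vc{\pi}^{(N)}\vc{E}\vc{D}$. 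The first task is therefore to justify this identity in the countable-state setting: existence of $\vc{D}$, absolute convergence of the series defining $\vc{\pi}^{(N)}\vc{E}\vc{D}$, and the validity of the rearrangement. Here Assumption~\ref{assum1} (finite mean increments and negative mean drift $\sigma<0$) does the work; the light-tail Assumptions~\ref{assum:light-tailed}--\ref{assum3} are \emph{not} needed for the difference formula itself and enter only later in Theorem~\ref{th:geo}.

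Second, I would compute $\vc{E}$ blockwise. By \eqref{defn-A^(N)(k)}--\eqref{defn-B^(N)(k)} the perturbation is supported on the far columns: in row level $0$ it equals $\ol{\vc{B}}_N$ at column level $N$ and $-\vc{B}_n$ at column level $n>N$, and in row level $\ell\ge 1$ it equals $\ol{\vc{A}}_N$ at column level $\ell+N$ and $-\vc{A}_n$ at column level $\ell+n$ for $n>N$. Writing $\ol{\vc{A}}_N=\sum_{n>N}\vc{A}_n$ (and likewise for $\vc{B}$) collapses each $\ol{\vc{A}}_N$/$-\vc{A}_n$ pair into a telescoping combination, so that the level-$k$ block of $\vc{\pi}^{(N)}\vc{E}\vc{D}$ becomes
\begin{align*}
\vc{\pi}_0^{(N)}\sum_{n>N}\vc{B}_n\bigl(\vc{D}_{N,k}-\vc{D}_{n,k}\bigr)
+\sum_{\ell\ge 1}\vc{\pi}_\ell^{(N)}\sum_{n>N}\vc{A}_n\bigl(\vc{D}_{\ell+N,k}-\vc{D}_{\ell+n,k}\bigr),
\end{align*}
where $\vc{D}_{m,k}$ denotes the (level $m$, level $k$) block of $\vc{D}$. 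This already matches the shape of \eqref{eq:lem}; what remains is to identify the block differences $\vc{D}_{m,k}-\vc{D}_{m',k}$.

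The heart of the argument is a closed form for the deviation-matrix blocks of an M/G/1-type chain. Exploiting skip-free-to-the-left structure, first passage from a high level $m$ down to level $k$ is governed by $\vc{G}^{m-k}$ in the phase coordinate; the level-$k$ occupation incurred before reaching level $0$ contributes the factor $\vc{F}_+(k;k)$; the excursion below level $0$ and the re-entry contribute through $[\vc{I}-\vc{\Phi}_0]^{-1}\vc{B}_{-1}\vc{H}(0;k)$, while the phase-dependent correction to the mean descent time is encoded by $\vc{G}(\vc{I}-\vc{A}-\ol{\vc{m}}_A\vc{g})^{-1}\vc{e}\vc{\pi}_k$ (these two pieces combining into $\vc{S}(k)$, weighted by $\vc{G}^{m-1}$); and the mean descent time itself produces a term linear in $m$ equal to $(m/\sigma)\vc{e}\vc{\pi}_k$, with $1/\sigma$ the reciprocal mean drift. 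Using the taboo/first-passage decomposition at levels $k$ and $0$ together with the Markov-additive fundamental matrix, I would establish
\begin{align*}
\vc{D}_{m,k}
= \frac{m}{\sigma}\vc{e}\vc{\pi}_k
+ \vc{G}^{m-k}\vc{F}_+(k;k)
+ \vc{G}^{m-1}\vc{S}(k)
+ \vc{C}(k),\qquad m\ge 1,
\end{align*}
with an additive block $\vc{C}(k)$ independent of $m$. Since only the differences $\vc{D}_{m,k}-\vc{D}_{m',k}$ appear, $\vc{C}(k)$ drops out, and substituting back—using $\sum_{n>N}(n-N)\vc{A}_n=\ool{\vc{A}}_{N-1}$ to turn the linear terms into the $\tfrac{1}{-\sigma}\ool{\vc{A}}_{N-1}\vc{e}\vc{\pi}_k$ and $\tfrac{1}{-\sigma}\ool{\vc{B}}_{N-1}\vc{e}\vc{\pi}_k$ contributions—yields \eqref{eq:lem} exactly.

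The main obstacle is this last step: deriving the deviation-matrix block representation with the correct coefficients. Establishing the $\vc{G}$-power structure is routine from skip-freeness, but isolating the linear-in-$m$ term and pinning the factor $1/\sigma$, together with showing that the sub-level-$0$ and drift corrections assemble precisely into $\vc{S}(k)$, requires careful bookkeeping of first-passage occupation measures and of the fundamental matrix $(\vc{I}-\vc{A}-\ol{\vc{m}}_A\vc{g})^{-1}$ of the phase process. This identification—rather than the perturbation identity itself—is where the real work lies.
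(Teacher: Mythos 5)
Your overall route is the right one, and it is essentially the route behind the cited result: the paper itself gives no proof of Proposition~\ref{prop:difference_formula} but imports it from \cite[Lemma 4.1]{Ouchi-Masuyama21}, which in turn rests on a block-decomposed solution of the Poisson equation for M/G/1-type chains (cf.\ the title of \cite{Masu21-M/G/1-Subexp}). Your bookkeeping of the perturbation $\vc{E}=\vc{P}^{(N)}-\vc{P}$ is correct: row level $0$ carries $\sum_{n>N}\vc{B}_n$ at column $N$ and $-\vc{B}_n$ at columns $n>N$, row level $\ell\ge 1$ the analogous $\vc{A}$-blocks shifted by $\ell$, and the telescoping identity $\sum_{n=N+1}^{\infty}(n-N)\vc{B}_n=\ool{\vc{B}}_{N-1}$ (and its $\vc{A}$-analogue) does convert the linear-in-$n$ parts of the block differences into the $\frac{1}{-\sigma}\ool{\vc{B}}_{N-1}\vc{e}\vc{\pi}_k$ and $\frac{1}{-\sigma}\ool{\vc{A}}_{N-1}\vc{e}\vc{\pi}_k$ terms, while the $\vc{G}$-power terms line up exactly with \eqref{eq:lem}. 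The observation that the $m$-independent block $\vc{C}(k)$ drops out is also sound, since $\vc{E}\vc{e}=\vc{0}$ makes $\vc{\pi}^{(N)}\vc{E}\vc{X}$ invariant under $\vc{X}\mapsto\vc{X}+\vc{e}\vc{c}$.

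The genuine gap is the step you yourself flag and then defer: the block representation
$\vc{D}_{m,k}=\frac{m}{\sigma}\vc{e}\vc{\pi}_k+\vc{G}^{m-k}\vc{F}_+(k;k)+\vc{G}^{m-1}\vc{S}(k)+\vc{C}(k)$
is not a routine consequence of skip-freeness; it \emph{is} the lemma. Everything that makes the statement nontrivial --- that the coefficient of the linear term is exactly $1/\sigma$, that the occupation-before-$\bbL_0$ piece enters with weight $\vc{G}^{m-k}$ and matrix $\vc{F}_+(k;k)$ rather than some other taboo kernel, and that the boundary excursion and the phase-dependent correction to the descent time assemble into precisely $\vc{S}(k)=[\vc{I}-\vc{\Phi}_0]^{-1}\vc{B}_{-1}\vc{H}(0;k)+\vc{G}(\vc{I}-\vc{A}-\ol{\vc{m}}_A\vc{g})^{-1}\vc{e}\vc{\pi}_k$ weighted by $\vc{G}^{m-1}$ --- is asserted on heuristic grounds. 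A complete proof must actually solve the Poisson equation $(\vc{I}-\vc{P})\vc{X}=\vc{I}-\vc{e}\vc{\pi}$ blockwise (e.g.\ via the regenerative representation underlying $\vc{H}$, first-passage decomposition at $\bbL_k$ and $\bbL_0$, and the Markov-additive fundamental matrix $(\vc{I}-\vc{A}-\ol{\vc{m}}_A\vc{g})^{-1}$) and verify these identifications. In addition, the interchange of summations needed to pass from $\vc{\pi}^{(N)}(\vc{I}-\vc{P})\vc{X}=\vc{\pi}^{(N)}-\vc{\pi}$ to the blockwise expression requires an a priori growth bound on $\vc{X}_{m,k}$ (linear in $m$) played against $\sum_n n\vc{A}_n<\infty$ and $\sum_n n\vc{B}_n<\infty$ from Assumption~\ref{assum1}; you name this but do not carry it out. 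As a plan the proposal is consistent with the cited proof; as a proof it is missing its core.
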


\PFOF{Theorem~\ref{th:geo}}
For simplicity, we rewrite the difference formula \eqref{eq:lem}. To this end, let $\vc{S}_0(k) = \vc{F}_+(k; k) + \vc{G}^{k-1}\vc{S}(k)$. It then follows from \eqref{eq:lem} that, for $0 \le k \le N$,
\begin{align}
&\vc{\pi}^{(N)}_k - \vc{\pi}_k
\nonumber
\\
&~=
\frac{1}{-\sigma}
\Biggl[
\vc{\pi}^{(N)}_0
\ool{\vc{B}}_{N - 1}\vc{e}
+ \sum_{\ell=1}^{\infty} \vc{\pi}^{(N)}_\ell
\ool{\vc{A}}_{N - 1}\vc{e}
\Biggr]
\vc{\pi}_k
\nonumber
\\
&\quad {} +
\vc{\pi}^{(N)}_0
\sum_{n=N+1}^{\infty} \vc{B}_n
(\vc{G}^{N-k}- \vc{G}^{n-k})\vc{S}_0(k)
\nonumber
\\
&\quad {} +
\sum_{\ell=1}^{\infty} \vc{\pi}^{(N)}_\ell
\sum_{n=N+1}^{\infty}\vc{A}_n
\left(\vc{G}^{N + \ell - k} - \vc{G}^{n + \ell - k}\right)\vc{S}_0(k).
\label{eq:piN-pi_S0}
\end{align}

First, we consider the asymptotics of the first term of (\ref{eq:piN-pi_S0}).
From Assumption~\ref{assum3}, we have
\begin{align*}
\lim_{N\to\infty} {\ool{\vc{B}}_{N-1}\vc{e} \over r^{-N}f(N)}
&= \lim_{N\to\infty} r {\ool{\vc{B}}_{N-1}\vc{e} \over r^{-(N-1)}f(N-1)} {f(N-1) \over f(N)}
= r\vc{c}_B,
%\label{eq:ool_B(N-1)/r^(-N)_to_c_B-2}
\\
\lim_{N\to\infty} {\ool{\vc{A}}_{N-1}\vc{e} \over r^{-N}f(N)}
&= \lim_{N\to\infty} r {\ool{\vc{A}}_{N-1}\vc{e} \over r^{-(N-1)}f(N-1)} {f(N-1) \over f(N)}
= r\vc{c}_A.
%\label{eq:ool_A(N-1)/r^(-N)_to_c_A-2}
\end{align*}
Using these equations and (\ref{prop:convergence}), we obtain
\begin{align}
&
\lim_{N\to\infty}
{1 \over r^{-N}f(N)}
\frac{1}{-\sigma}
\Biggl[
\vc{\pi}^{(N)}_0
\ool{\vc{B}}_{N - 1}\vc{e}
+ \sum_{\ell=1}^{\infty} \vc{\pi}^{(N)}_\ell
\ool{\vc{A}}_{N - 1}\vc{e}
\Biggr]
\vc{\pi}_k
\nonumber
\\
& {}
\quad= {r \over -\sigma}
\left(
\vc{\pi}_0 \vc{c}_B
+ \ol{\vc{\pi}}_0 \vc{c}_A
\right)\vc{\pi}_k,\quad k \in \bbZ_+.
\label{eq:decay_rate_ool}
\end{align}

Next, we evaluate the second term of (\ref{eq:piN-pi_S0}). Equation \eqref{eq:G-ergodic} implies that
\begin{align}
&\left|\vc{G}^{N-k} - \vc{G}^{n-k}\right|
\nonumber
\\
&\quad\le \ol{\bcal{O}}\left((1+\varepsilon)^{-N+k}\right) + \ol{\bcal{O}}\left((1+\varepsilon)^{-n+k}\right)
\nonumber
\\
&\quad= \ol{\bcal{O}}\left((1+\varepsilon)^{-N+k}\right), \quad 0 \le k \le N,~~ n \ge N+1.
\label{eq:Gn-GN}
\end{align}
Furthermore, Assumption~\ref{assum3} ensures that
\begin{equation}
\lim_{N\to\infty} {\sum_{n=N+1}^\infty \vc{B}_n\vc{e} \over r^{-N}f(N)}
= \lim_{N\to\infty} {\ool{\vc{B}}_{N-1}\vc{e} - \ool{\vc{B}}_N\vc{e} \over r^{-N}f(N)}
= (r-1)\vc{c}_B.
\label{eq:sumB/r^-N}
\end{equation}
Combining \eqref{eq:Gn-GN} and \eqref{eq:sumB/r^-N} yields
\begin{equation}
\lim_{N\to\infty}
{\sum_{n=N+1}^\infty \vc{B}_n(\vc{G}^{N-k} - \vc{G}^{n-k})\vc{S}_0(k)
\over
r^{-N}f(N)}
= \vc{O}, \quad k \in \bbZ_+.
\label{eq:ol_BG/r^(-N)_to_0}
\end{equation}

We then consider the third term of (\ref{eq:piN-pi_S0}). As in \eqref{eq:sumB/r^-N}, we have
\begin{equation}
\lim_{N\to\infty} {\sum_{n=N+1}^\infty \vc{A}_N\vc{e} \over r^{-N}f(N)}
= (r-1)\vc{c}_A.
\label{eq:sumA/r^-N}
\end{equation}
From \eqref{eq:Gn-GN}, we also obtain
\begin{align}
&\sum_{\ell=1}^\infty |\vc{G}^{-N-\ell+k} - \vc{G}^{-n-\ell+k}|
\nonumber
\\
&\quad\le \sum_{\ell=1}^\infty \ol{\bcal{O}}\left((1+\varepsilon)^{-N-\ell+k}\right)
\nonumber
\\
&\quad= \ol{\bcal{O}}\left((1+\varepsilon)^{-N+k}\right),  \quad 0 \le k \le N,~~ n \ge N+1,
\label{eq:GNell-Gnell}
\end{align}
where the last equality is due to
\[
\sum_{\ell=1}^\infty (1+\varepsilon)^{-N-\ell+k} = \varepsilon^{-1}(1+\varepsilon)^{-N+k},
 \quad 0 \le k \le N.
\]
Note here that $\vc{\pi}^{(N)}_\ell \le \vc{e}$ for $\ell \in \bbN$. Therefore, applying the dominated convergence theorem, \eqref{eq:sumA/r^-N}, and \eqref{eq:GNell-Gnell} to the third term of (\ref{eq:piN-pi_S0}) yields, for $k \in \bbZ_+$,
\begin{equation}
\lim_{N\to\infty}
\sum_{\ell=1}^\infty \vc{\pi}^{(N)}_\ell
{\sum_{n=N+1}^\infty \vc{A}_n(\vc{G}^{N+\ell-k} - \vc{G}^{n+\ell-k})\vc{S}_0(k)
\over
r^{-N}f(N)}
=\vc{O}.
\label{eq:ol_AG/r^(-N)_to_0}
\end{equation}

Finally, we obtain \eqref{eq:th-geo} by combining (\ref{eq:piN-pi_S0}), \eqref{eq:decay_rate_ool}, \eqref{eq:ol_BG/r^(-N)_to_0}, and \eqref{eq:ol_AG/r^(-N)_to_0}. The proof is completed.
%%%%%%%%%%%%%%%%%%%%%%%%%%%%%%%%%%%%%%%%%%%%%%%%%%%%%%%%%%%%%%%%%%%%%%%%%%%%%%%%
\section*{Acknowledgments}
The research of Hiroyuki Masuyama was supported in part by JSPS KAKENHI Grant Number JP21K11770.

%%%%%%%%%%%%%%%%%%%%%%%%%%%%%%%%%%%%%%%%%%%%%%%%%%%%%%%%%%%%%%%%%%%%%%%%%%%%
%%%%%%%%%%%%%%%%%%%%%%%%%%%%%%%%%%%%%%%%%%%%%%%%%%%%%%%%%%%%%%%%%%%%%%%%%
%%%%%%%%%%%%%%%%%%%%%%%%%%%%%%%%%%%%%%%%%%%%%%%%%%%%%%%%%%%%%%%%%%%%%%%%%

%\bibliography{hm-220406_modified_by_HM}

\end{document}